\newtheorem{theorem}{Theorem}
\newtheorem{definition}{Definition}
\newtheorem{lemma}{Lemma}
\begin{document}

\title{Solution of the word problem for semigroups without cycles}

\author{A. Malkhasyan}
\date{}

\maketitle

\paragraph{Introduction}

This article provides a solution to the problem of word equality in so called semigroups without cycles. In 1966 Sergei Ivanovich Adian published a monograph in
Proceedings of the Steklov Institute of Mathematics, which particularly proved
that the problem of word equality is decidable in one-relator semigroups that have the cancellative property. 
 
Wherein, S.I.Adian introduced the concept of semigroups without cycles, for which the theorem on its embeddability in a group with the same relations was proved. Since a one-relator cancellative semigroup is a semigroup without cycles, it allows using Magnus’s algorithm which makes the problem of word equality decidable in one-relator groups (see \cite{2}, \cite{3}).

For further presentation, it would be appropriate to recall some concepts and facts from \cite{1}, \cite{2}.
Let the semigroup $\Pi$ be given by the generators $$a_{1}, a_{2}, \ldots, a_{q}$$ and defining relations  $$P_{i}=Q_{i},  i=1,2, \ldots, {l}.$$ 
To each generator $a$, that is the beginning of some defining word, there is assigned a one-to-one point on the plane, which we will call the vertex $a$.

Two vertices are connected by an edge if and only if they come first, which means they are the leftmost letters of the parts of some defining relation of the semigroup $\Pi$. The graph that is constructed this way is called the left graph of the semigroup $\Pi$.

The right graph of the semigroup $\Pi$ is constructed similarly if we consider the ends of the defining relations instead of the first letters of the defining words.
The semigroup $\Pi$ is called non-cyclic if the left and the right graphs do not contain cycles.

In \cite{1}, where the aforesaid Adjan's theorem was proved, about the semigroup $\Pi$ embeddability in a group with the same relations, the decisive result was the word $aX$ ($Xa$) is equal to the word $aY$ ($Ya$)  in semigroup $\Pi$, if and only if the word $X$ is equal to the word $Y$. Based on this fact some unambiguous representations of the words of semigroup $\Pi$ were constructed.

Let us briefly outline the essence of these constructions, in view of their frequent use when presenting the results of this paper. If the word $U$ is empty, then it does not have a prefix presentation. Let $U$ be the word of the semigroup $\Pi$, the first letter of which is $a$ and it is required to find out whether it is equal to the word $V$, with the first letter $b$, which we will call a guide letter for $U$.

It is clear that equality is possible only if in the left graph of the semigroup $\Pi$ there is a path, beginning from the vertex $a$ and ending in the vertex $b$. In \cite{1} it is shown that a pair of letters $a$,$b$ defines some unambiguous representation of the word $U$, which we call its prefix presentation.
 
Since the graph of the semigroup $\Pi$ has no cycles, then there is only one path connecting vertex $a$ to(with) vertex $b$. Let $a$, $a_{i_{1}}$,\ldots, $a_{i_{t}}$, $b$ - consecutive vertices of this path, and, therefore, there is only one defining relation in the form $P_{i}=Q_{i}$, first letters of which, respectively, $a$ and $a_{i_{1}}$. (If there is no such defining word, it will mean that the prefix presentation of the word $U$ does not exist).

By $\alpha_{1}$ we denote the longest common beginning of the words $ U $ and $ P_ {i} $. Then $U\eqcirc \alpha_{1} U_1$, $P_{i} \eqcirc \alpha_{1} P'_{i}$ and if the word  $\alpha_{1}$ will be the defining word $P_{i}$, then the construction of the word representation $U$ is completed and we will call it prefix representation, and the the word $P_{i}$ itself is the prefix head of that representation. If the length of $P_{i}$ turns out to be greater than the length of its beginning $\alpha_{1}$, assuming that the prefix representation of the word $U$-$PR(U)\eqcirc \alpha_{1} PR(U_1)$, the first letter $a_{j}$ of the word $U_1$ should transform to the first letter $c$ of some defining word $P'_{i}$, thereby uniquely defining the guide letter for the word $U_1$. 
As a result of this construction, the word $U$ can be uniquely represented in the form

$$U \eqcirc \alpha_{1} \ldots \alpha_{k} U_{k}, (k \geq 0)$$. 

Note that, if the letter $a$ coincides with $b$ at the beginning of the process, then as $\alpha_ {1}$ we take the letter $a$, and we continue our construction for the letters following $a$ in the words $U$ and $V$.

 The prefix representation of the word $U$ is considered constructed, if either $\alpha_{k}$ is a prefix head or the first letter $a_{i_ {k}}$ of the word $U_ {k}$ is not the first letter of any of parts of a suitable constitutive defining relation in the form $P=Q$.
 This will mean that in the graph of the semigroup $\Pi$ there is no suitable path leading from the vertex $a$ to the vertex $b$. In particular, this means that the word $U$ is not equal to the word $V,$ and the word $U$ will be represented as a sequence of prefixes without a head.

The suffix representation of the word $U$ is defined analogically if $c$ and $d$ are considered last letters of $U$, $V$ respectively. Thus, $U$ will have the following representation $U \eqcirc U_{m} \beta_{m} \ldots \beta_{1}$ where $\beta_{m}$ is either the suffix head of the word U or the suffix head is missing in the representation of the word.
The words $\alpha_{i}$ $(\beta_{i})$ we will call prefixes(suffixes) of the representation of the word $U.$

Thus, an arbitrary word $U$ can be represented with respect to $(b,d)$ as first and last letters of words $U$ and $V$ respectively, both in the prefix and suffix forms. More about the uniqueness of these presentations can be found in (\cite{1}, \cite{2}).

We’ll call the transformation of word $U$ to $V$ a prefix transformation if in the word $U$ written in its prefix representation, the head $P_{j}$ is replaced with an unambiguously defined word $Q_{j}$, in other words, a simple transformation of a semigroup $\Pi$ occurs, after which the obtained word $W$  is written in its prefix representation.(The suffix representation transformation  of the word $U$ is defined analogically). 
The word $U_1$, that is obtained from the word $U$ as a result of that transformation, will be called the prefix transformation form of $U$ with the first rank. Furthermore, we’ll call $U_{k}$ a prefix form of $U$ with $k$ rank, if it is a one-step transformation of $U_{k-1}$ with rank {k-1}.(The suffix transformation form is defined analogically).

\begin{lemma}\label{ekvivalent} An arbitrary prefix form $U_{k}$ of the word $U$ from the sequence of the transformation of the word $U$ to $V$, is also  a suffix form of the word $U$ with the same rank.
 \end{lemma}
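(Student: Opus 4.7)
I would proceed by induction on the rank $k$. The base case $k = 0$ is immediate, since $U_0 \eqcirc U$ is tautologically both a prefix form and a suffix form of $U$ of rank $0$.

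For the inductive step, assume $U_{k-1}$ is simultaneously a prefix form and a suffix form of $U$ of rank $k-1$. The prefix transformation $U_{k-1} \to U_k$ consists of a single simple transformation replacing one occurrence of a defining word $P_j$ by $Q_j$, at the position of the prefix head $\alpha_r$ in the prefix decomposition $U_{k-1} \eqcirc \alpha_1 \ldots \alpha_r U_r$. To close the induction it suffices to show that this very substitution is also realizable as a suffix transformation of $U_{k-1}$; equivalently, that the prefix head $P_j$ of $U_{k-1}$ appears as a suffix head $\beta_s$ in some suitable suffix representation of $U_{k-1}$.

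I would choose the guide letter for the suffix representation of $U_{k-1}$ to be the last letter of $V$, the endpoint of the full prefix sequence. Since the whole sequence of prefix transformations terminates at $V$, the last letters of $U_{k-1}$ and of $V$ are joined by a path in the right graph of $\Pi$. The absence of cycles makes that path unique, and combined with the uniqueness of the suffix representation established in \cite{1}, \cite{2}, this pins down both the defining relation to be applied and the location at which the suffix head must sit inside $U_{k-1}$.

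The main obstacle is to verify that this location coincides with the location of the prefix head. I would attack this by writing $U_{k-1}$ simultaneously in both forms, $\alpha_1 \ldots \alpha_r U_r \eqcirc U_{k-1} \eqcirc U'_s \beta_s \ldots \beta_1$, and matching off the initial block $\alpha_1 \ldots \alpha_{r-1}$ on the left against the terminal block $\beta_{s-1} \ldots \beta_1$ on the right, so as to identify the middle factor as a common head for both decompositions. The non-cyclic property of the left and right graphs should be the key ingredient here, ruling out alternative placements of the head that cycles would otherwise allow, and the uniqueness statements recalled from \cite{1} should then force $\alpha_r$ and $\beta_s$ to be the same occurrence of the same $P_j$, yielding $U_k$ as a suffix form of rank $k$.
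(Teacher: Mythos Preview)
Your induction scheme hinges entirely on the claim that, in each $U_{k-1}$, the prefix head and the suffix head are one and the same occurrence of one and the same defining word.  You flag this as ``the main obstacle'' but then only sketch a plan (match $\alpha_1\ldots\alpha_{r-1}$ against $\beta_{s-1}\ldots\beta_1$ and invoke acyclicity) without carrying it out.  In fact this claim is \emph{false} in general: the paper's own Lemma~\ref{raspolgol} records precisely that a term of such a sequence has the generic shape
\[
U_s \eqcirc K\,\alpha_1\ldots\alpha_n\,A\,X\,B\,\beta_1\ldots\beta_m\,L,
\]
with prefix head $A$ strictly to the left of the suffix head $B$, the coincidence $A=B$ being only a degenerate case.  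When $A\ne B$, the prefix step $U_{k-1}\to U_k$ replaces $A$, while the unique suffix step from $U_{k-1}$ replaces $B$ and lands on a different word.  So your inductive step, as written, does not go through: knowing that $U_{k-1}$ is a suffix form of rank $k-1$ tells you only that \emph{some} word is a suffix form of rank $k$, not that $U_k$ is.

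The paper avoids this trap by abandoning the step-by-step matching altogether.  Its induction is on the length $\lambda$ of the whole sequence, and the inductive step is a \emph{rearrangement} argument: one locates the transformation $U_l\to U_{l+1}$ replacing an occurrence $Q\mapsto P$ with a fixed right tail $L$, traces $Q$ back to the term $U_t$ where it first appears, and then reorders the sequence so that this replacement is performed immediately after $U_t$ (where it \emph{is} a suffix step), applying the inductive hypothesis to the two shorter pieces on either side.  The upshot is the same set of elementary substitutions, performed in a different order, each now legitimately a suffix transformation.  If you want to salvage your approach you would need to prove directly that the two heads always coincide along a sequence that actually reaches $V$; short of that, the rearrangement route is what makes the lemma work.
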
 

\begin{proof} 
	Let \begin{equation} \label{psf} U=U_1 \rightarrow U_2 \rightarrow \ldots \rightarrow U_{\lambda} \end{equation}
be a sequence of prefix transformations, meaning each term of the sequence is a prefix form of the word $U.$ We’ll prove by induction on $\lambda$ rank of the prefix form. If  $\lambda=1$ then the prefix head of the word $U$ coincides with its suffix head. Let $\lambda>1$ and $U_{1}, U_{2,} \ldots, U_{\lambda}$ is the sequence of prefix forms of the word $U$.

It’s clear that in the sequence $$U \rightarrow U_1 \rightarrow U_2 \rightarrow \ldots \rightarrow U_{\lambda},$$ each $U_{j}$ is a prefix form of the word $U_{i},$ where ${i<j}.$
Let $$U_{l} \rightarrow U_{1+1}, 0 \leq l < \lambda,$$ be the first prefix transformation of this sequence that has the following form $$U_{l} \rightarrow U_{1+1}, l < {\lambda},$$ $$U_{1} \eqcirc K Q L,  U_{1+1} \eqcirc K R L.$$
The word $L$ is not changed during transformations, so this transformation is changing the leftmost letter of $U$. 

Let’s look into the subsequence $U_1 \rightarrow \ldots \rightarrow U_{t}$ in the sequence \eqref{psf}, where $Q$ is introduced for the first time in the word $U_{t}$, $U_{t} \eqcirc K` Q L,$  $0 \leqslant t \leqslant l.$ 

Now, first, using the induction, we’ll replace the considered subsequence with the  subsequence of suffix transformations. Then we’ll transform the word $U_{t}$ to $U_{t+1} \eqcirc K` P L,$, clearly with suffix transformation. After this we’ll replace the subsequence $U_{t} \rightarrow \ldots \rightarrow U_{1}$ with the subsequence $V_{t+1} \rightarrow \ldots \rightarrow V_{1+1},$ where $V_{i+1}$ is obtained from word $U_{i}$ with the replacement of the indicated occurrence $Q$ by $P,$ and as in that subsequence the subword $PL$ is not affected, we’ll use induction to change it to a sequence of suffix transformations $U_{l+1} \rightarrow \ldots \rightarrow U_{\lambda}$. By the principle of mathematical induction, the subsequence of prefix transformations can be replaced by the subsequence of suffix transformations.
 \end{proof}

It’s obvious that if $U$ and $V$ are equal in the semigroup $\Pi$, then there exists a sequence of prefix(suffix) transformations, changing the word $U$ into the word $v$, that is, $V$ will be the prefix form of the word $U$. It is clear that, in the end, the letter $a$ will transform into the letter $b$ during transformations, or the sequence will be interrupted due to the absence of the head in one of the $U_{j}$ words, and finally, the sequence can turn to be infinite.
Let’s look into the properties of infinite sequences of prefix(suffix) transformations with the following form \begin{equation} \label{pref.preob} U \eqcirc U_1 \rightarrow U_2 \rightarrow \ldots \rightarrow U_{n} \rightarrow\ldots \end{equation} 

\begin{definition} \label{nepod}
The pair of subwords $K$ and $L$, the word $U_{n} \eqcirc K Q L$, the sequences in the form of \eqref{pref.preob}, that are not a part of further transformations $$U_{n} \rightarrow\ldots,$$ will be called the span of the word $U_{n}$,with the ordered pair of edges $(K, L),$ and the subword $Q,$ the first and last letter of which have been affected by those transformations at least once,  will be called the kernel.
\end{definition}

From infinity of the given sequence and due to the  fact that the  defining relations of the semigroup do not contain cycles for any of $U_{n}$ terms from the sequence \eqref{pref.preob}, in the form of $K Q L$ there exists a number $m$,  ($m$ > $n$) so that in the infinite  sequence $U_{m} \rightarrow \ldots$, the letter after the subword $K$ and the letter before the subword $L$, are not affected by the above transformations. In other words, the terms of the sequence \eqref{pref.preob} will have expanding span, herewith, it’s obvious that the span will be words consisting of prefixes( $K$ left edge) and suffixes( $L$ right edge).

 \begin{lemma}   \label{raspolgol}

Any word $U_{s}$ of the sequence\eqref{pref.preob}, or, in other words, any prefix form of the word $U$ has the form $$U_{s} \eqcirc K \alpha_{1} \ldots \alpha_{n} A X B \beta_{1} \ldots \beta_{m} L,$$ where $A$ is the prefix, and $B$ is the suffix head of the word $U_{n}$, or $$U_{s} \eqcirc K\alpha_{1} \ldots \alpha_{n} A  \beta_{1} \ldots \beta_{m} L,$$ where $A$ is the prefix and the suffix head at the same time, ($(K,L)$ is the span of the word $U_{n}$).
\end{lemma}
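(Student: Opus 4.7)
The plan is to combine the equivalence of prefix and suffix forms given by Lemma \ref{ekvivalent} with the uniqueness of the two representations established in \cite{1}. First I would invoke Lemma \ref{ekvivalent} to conclude that the word $U_s$, being a prefix form of rank $s$, is simultaneously a suffix form of the same rank. Writing out both representations in parallel I obtain
$$
U_s \eqcirc \gamma_1 \ldots \gamma_p \, A \, W \eqcirc W' \, B \, \delta_q \ldots \delta_1,
$$
where the $\gamma_i$ are the prefixes, $A$ is the prefix head (when one is present), and dually the $\delta_j$ are the suffixes and $B$ is the suffix head.

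Next, I would locate the span $(K,L)$ of $U_n$ inside these two decompositions. By the observation following Definition \ref{nepod}, the left edge $K$ is itself a concatenation of prefixes none of which is touched by any subsequent transformation, so $K$ must appear as an initial segment of $\gamma_1 \ldots \gamma_p$; I then relabel the remaining prefixes as $\alpha_1, \ldots, \alpha_n$. Symmetrically, $L$ is a terminal segment of $\delta_q \ldots \delta_1$, and the remaining suffixes become $\beta_1, \ldots, \beta_m$. This gives
$$
U_s \eqcirc K\, \alpha_1 \ldots \alpha_n\, A\, W \eqcirc W'\, B\, \beta_1 \ldots \beta_m\, L,
$$
and the problem is reduced to comparing the relative positions of $A$ and $B$ inside $U_s$.

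Here two configurations can arise. Either $A$ lies entirely to the left of $B$, in which case taking $X$ to be the (possibly empty) subword strictly between them yields the first form of the lemma. Or the two heads overlap; in this case I would argue from the uniqueness of the prefix and suffix representations, together with the absence of cycles in the left and right graphs, that the overlap forces $A$ and $B$ to coincide as entire defining words, producing the second form with $A$ serving simultaneously as prefix and suffix head. The a priori remaining possibility, in which $A$ would extend strictly past the right end of $B$ (or symmetrically), is ruled out by the same uniqueness principle applied to the stable subwords $K$ and $L$, since it would contradict the fact that neither $K$ nor $L$ can be altered.

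The main obstacle will be the overlap case. The difficulty is not to produce one or two matching letters but to show that a non-trivial overlap propagates to full coincidence of $A$ and $B$ as defining words. For this I would exploit the fact that a head is a full defining word determined by its first letter together with the guide-letter path in the left graph (and dually for the suffix head and the right graph): once both representations force the first and the last letters of the relation active in the overlap region, acyclicity leaves no room for an alternative defining word, so $A \eqcirc B$. The remaining bookkeeping --- placing $K$ and $L$ and enumerating the intermediate prefixes and suffixes --- is then a routine consequence of the uniqueness statements recalled from \cite{1}.
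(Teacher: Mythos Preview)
Your plan differs from the paper's argument in the crucial step, and I do not see how to close the gap with the tools you invoke.

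The paper does not deduce the lemma from uniqueness and acyclicity alone. It argues dynamically: assume the suffix head $S$ begins strictly inside some prefix $\alpha_t$ (so $\alpha_t\eqcirc\alpha'_t\alpha''_t$ with $S$ starting at $\alpha''_t$). Applying the suffix transformation $S\to Q_j$ changes the first letter of $\alpha''_t$; hence the prefix $\alpha_t$ is destroyed, and the only way for the prefix process to continue past $\alpha'_t$ is to restore $S$. Since the sequence \eqref{pref.preob} is infinite, this forces an equality of the form $S\beta_i\ldots\beta_1L = S\,B\,L$ in $\Pi$, contradicting Adjan's cancellation theorem. The partial-overlap cases between the two heads are then reduced to this same argument (``the suffix head cuts the prefix head'').

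Your plan replaces this with the claim that a non-trivial overlap of $A$ and $B$ ``propagates to full coincidence'' because ``acyclicity leaves no room for an alternative defining word''. That is not true in general: acyclicity of the left and right graphs does \emph{not} prevent two distinct defining words from overlapping. For instance, with relations $ab=cd$ and $bc=ef$ both graphs are forests, yet $ab$ and $bc$ share the letter $b$. So acyclicity by itself cannot force $A\eqcirc B$; something about the infinite sequence (and ultimately Adjan's theorem) is needed. The same objection applies to your last paragraph: the case where the suffix head starts inside some $\alpha_t$ is not covered by ``$K$ and $L$ cannot be altered'', because the $\alpha_t$ lie \emph{between} $K$ and $A$ and are still subject to future transformations. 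Your case split (``$A$ entirely left of $B$'', ``overlap'', ``$A$ extends past the right end of $B$'') thus omits precisely the configuration that carries the weight of the paper's proof.

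To repair the argument you will need to bring in the infinitude of \eqref{pref.preob} and Adjan's cancellation theorem, as the paper does.
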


 \begin{proof}
 Let us show that other arrangements of prefix and suffix heads in the words $U_{s}$ are impossible.
 First, note that the suffix head of the word $U_{s}$ cannot cut any prefix of that word. Indeed, if  $$U_{s} \eqcirc K \alpha_{1} \ldots\alpha'_{t} S \beta_{i} \ldots\beta_{1} L,$$ where $S$ is the suffix head, the beginning of which coincides with the beginning of the subword $\alpha''_{t}$, where $\alpha'_{t}\alpha''_{t} \eqcirc \alpha_{t}$is the prefix, then the suffix transformation, that replaces the defining word $S$ with some defining word  $Q_{j},$ changes the first letter of the subword $\alpha''_{t},$ but then the defining relation that changes the beginning of the word $\alpha'_{t}$ cannot be found without returning $S.$ However, this can happen if an ongoing sequence of suffix transformations translates the word $S \beta_{i} \ldots\beta_{1}L$ into a word in the form $S B L,$ which contradicts Adjan theorem. 
If $\alpha''_{t}$ is empty, then $S$ coincides with the prefix head of the word $U_{n}.$ Cases  
$$U_{n} \eqcirc K \alpha_{1} \ldots\alpha'_{t} S_{1} S S_{2} \beta_{i} \ldots\beta_{1} L,$$  
where $S_{1} S$ is a prefix, and $S S_{2}$ is a suffix head, or $S_{1} S S_{2}$ is a prefix head, and $S$-suffix heads are brought to the symmetrical reasoning discussed above, since in these cases the suffix head cuts the prefix head.
 \end{proof}

Let \eqref{pref.preob} be a sequence of prefix transformations, words $U$ to word $V$, $a$ is the first letter of a word $U$, $b$ is the guide letter of this sequence. If the first letter of any of the terms of the sequence becomes $b$ the sequence ends there, and then we can consider the transformations of the obtained prefix form $U$
into  the word obtained from $V$ by erasing the first letter $b.$  
We will focus on the case when sequence \eqref{pref.preob} is infinite: none of the forms of the word $U$ begins with the letter $b.$  
 
By virtue of Lemma \eqref{ekvivalent} every two terms of this sequence $U_{s}$ and $U_{t}$, being equal in the semigroup $\Pi$, are translated from one to the other by both prefix and suffix transformations. Since the sequence of transformations is infinite, and the defining relations of the semigroup $\Pi$ do not contain cycles, as we have already noted the first (and last) letters of the terms of the sequence  \eqref{pref.preob}), starting from a certain number,will not participate in further transformations.  

We will use the record of the sequence terms in the forms indicated in \eqref{raspolgol}. Further, the subword  $A X B \beta_{1} \ldots AXB\beta_{m} L$ is the end of the representation of the word $U_{n}$ and the subword $K \alpha_{1} \ldots \alpha_{n} A X B$ is its beginning.

\begin{lemma}   \label{ekvivalent1}
Let the $U_{p}$ and $U_{q}$ be the words of the alphabet of the semigroup $\Pi$, terms of the sequence \eqref{pref.preob} in the form 
$$U_{p} \eqcirc K \alpha_{1} \ldots \alpha_{t } RXS \beta_{m} \ldots \beta_{1} L$$
$$U_{q} \eqcirc K' \gamma_{1} \ldots \gamma_{l}RXS\beta_{m} \ldots \beta_{1} L'$$ 
with the spans $(K, L)$ and $(K', L')$, respectively.
 
 Let the words $U_{p}$ and $U_{q}$ have the same suffix representations, $R$ and $S$, respectively, are the suffix and prefix heads of those words. Then the words that will be obtained after a single use of the suffix transformation to these words, will have graphically equal ends, with the same suffix.
\end{lemma}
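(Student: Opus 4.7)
The plan is to show that a single suffix transformation is determined entirely by the shared suffix representation of $U_p$ and $U_q$, so it acts identically on their common right portion.

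First, I would invoke the hypothesis that $U_p$ and $U_q$ have the same suffix representation, together with the uniqueness of the suffix representation recalled from \cite{1}. This forces the next suffix transformation to use the same defining relation $P=Q$ in both words, substituting the common suffix head by the same word, which I denote $W$.

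Second, I would write the transformed words explicitly. If $U_p^{+}$ and $U_q^{+}$ denote the results of the single suffix transformation, then
$$U_p^{+} \eqcirc K \alpha_1 \ldots \alpha_t \, M \, L, \qquad U_q^{+} \eqcirc K' \gamma_1 \ldots \gamma_l \, M \, L',$$
where $M$ is the common subword obtained from $R X S \beta_m \ldots \beta_1$ by substituting $W$ for the suffix head. Since $M$ is graphically the same in both words, the right portions of $U_p^{+}$ and $U_q^{+}$ coincide up to the differing span edges $L,L'$. I would then re-express both transformed words in their suffix representations, using Lemma \ref{ekvivalent} to read the representation off from the right end. Since the common tail $M$ and the guide letter of the sequence are the same, the new leftmost suffix and, if a merger with adjacent letters occurs, the new suffix head, must agree in both $U_p^{+}$ and $U_q^{+}$.

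The main obstacle I anticipate is ruling out that the substituted word $W$ merges with its neighbors differently in the two cases. A priori, $W$ could combine with adjacent letters of $X$ or $\beta_m$ to produce a longer suffix head, and one must verify that this merger is insensitive to the differing prefix sides $K \alpha_1 \ldots \alpha_t$ versus $K' \gamma_1 \ldots \gamma_l$. Because the merger is a purely local phenomenon governed only by the graphical form of the common tail $M$ and by the shared guide letter, Adjan's uniqueness theorem from \cite{1} forces the two suffix decompositions to coincide, which yields the claimed graphical equality of the ends and equality of the emerging suffix.
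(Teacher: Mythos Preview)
Your argument is correct when the prefix head $R$ and the suffix head $S$ are distinct: then the suffix transformation replaces $S$, the left neighbour of the substituted word $W$ is $X$ (which lies in the common tail $M$), and your locality argument goes through. The gap is in the case $R=S$ (i.e.\ $X$ is empty), which is precisely the case the paper devotes almost its entire proof to. In that case the transformed word is $K\alpha_1\ldots\alpha_t\,Q\,\beta_m\ldots\beta_1 L$ (resp.\ $K'\gamma_1\ldots\gamma_l\,Q\,\beta_m\ldots\beta_1 L'$), so the \emph{left} neighbour of $Q$ is $\alpha_t$ in one word and $\gamma_l$ in the other---and these do not belong to your common tail $M$. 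The new prefix head may well be $\alpha_t S_1$ (with $S_1$ an initial segment of $Q$), and in that situation your claim that ``the merger is a purely local phenomenon governed only by the graphical form of the common tail $M$'' is simply false: the merger depends on $\alpha_t$ versus $\gamma_l$.

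What your proposal is missing, and what the paper supplies, is an appeal to the \emph{infinity} of the sequence \eqref{pref.preob}. The paper splits into three subcases according to whether the new prefix head extends into $\alpha_t$ (resp.\ $\gamma_l$) in each of the two words. In the asymmetric subcase (it extends in one word but not the other) the paper shows, by running the suffix transformations forward and using Lemma~\ref{ekvivalent}, that one of the two sequences would be forced to terminate for lack of a head---contradicting the standing hypothesis that \eqref{pref.preob} is infinite. In the symmetric subcase where both heads extend, the paper again looks $k$ steps ahead along the infinite sequence to pin down the suffix head. None of this can be recovered from Adjan's uniqueness theorem alone; the infinity hypothesis is genuinely used, and your argument never invokes it.
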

 \begin{proof}
  
The statement is obvious if $R$ does not coincide with $S$, since the transformation either occurs to
the right of the word $R$ in the graphically equal words $X$ $S$ $\beta_{m} \ldots \beta_{1} L$ with the same suffix representations, or after the suffix transformation of the word $R$ to the corresponding defining word $D$, the suffix head will move to the word $R.$ There can be no other cases due to Lemma \eqref{raspolgol} and the infinity of the sequence \eqref{pref.preob}

Let us suppose that
$$U_{p} \eqcirc K \alpha_{1} \ldots \alpha_{t } R \beta_{m} \ldots \beta_{1} L$$
$$U_{q} \eqcirc K' \gamma_{1} \ldots \gamma_{l} R \beta_{m} \ldots \beta_{1} L'$$

For any suffix transformation specified by the transition of $R$ to $Q$ in the words
$$U_{p+1} \eqcirc K \alpha_{1} \ldots \alpha_{t } Q \beta_{m} \ldots \beta_{1} L$$ and
$$U_{q+1} \eqcirc K' \gamma_{1} \ldots \gamma_{l} Q \beta_{m} \ldots \beta_{1} L'$$  
and the prefix and suffix heads must affect the word $Q$. Indeed, if the prefix head, especially the suffix head, turns out to be to the left of the word $Q,$ then this would mean that it would be a head in the words $ U_ {p} $ and $ U_ {q} $, contrary to the assumption that it is $ R. $. The possibility of finding a prefix or suffix head to the right of $ Q$ is also refuted.Consider the possible positions of prefix and suffix heads in the words $U_{p+1}$ and $U_{q+1}.$

Case 1.
$$U_{p+1} \eqcirc K \alpha_{1} \ldots \alpha_{t} S_1 X D_1 \beta_{m} \ldots \beta_{1} L,$$
$$U_{q+1} \eqcirc K' \gamma_{1} \ldots \gamma_{l} S`1 X` D_1 \beta{m} \ldots \beta_{1} L',$$ 
where $Q \eqcirc S_1 X D_1 \eqcirc S`1 X` D_1$, $D \eqcirc D_1 \beta{m}$ is the suffix head in both words. If $\alpha_{t} S_1$ is the prefix head in the word $U_{p+1}$, considering that the sequence of transformations is infinite \eqref{pref.preob}, and using Lemma \eqref{ekvivalent}, there is a number $k$, so that as a result of suffix transformations in the word $V_{p+k+1},$ the suffix head obtained from $U_{p+1}$ coincides with the prefix head $\alpha_{t} S_1.$
Wherein, all these transformations are done in the subword $S_1 X D_1 \beta_{m} \ldots \beta_{1},$ and the suffix representation of the subword obtained from it has the form $S_1 \theta_r \ldots  \theta_1$ in the word
 $V_{p+k}$.
If these same transformations are applied to the word $U_{q+1},$ it is easy to notice that they follow the aforementioned suffix transformations, and therefore,
the subword $S'1 X' D_1 \beta{m} \ldots \beta_{1}$ transforms into the same subword $S_1 \theta_r \ldots  \theta_1$ with the same suffix representation. Finally, since $S_1$ is the ending of the suffix head with the same guide letter in both words, the suffix heads of the words $U_{p+1}$ and $U_{q+1}$ are graphically equal as they are equal to the same suffix head.

Case 2. In one of the words, for example, in $U_{q+1}$, the suffix head coincides with the prefix head. In other words, the subword $\gamma_{l} S`_1$ is not a prefix head. The same way as in the case 1, for the word $V_{q+k}$ we can get the suffix representation of its subword $S_1 \theta_r \ldots  \theta_1,$ where $\gamma_{l} S_1$ is not a defining word. However, this means that the sequence of transformations, starting from $V_{q+k}$, does not proceed due to the absence of a head in it.

Case 3. If in both $U_{p+1}$ and $U_{q+1}$ words the prefix coincides with the suffix head, the statement of the lemma follows from the unambiguity of suffix transformations and the direct forms of the representations of these words.

Lemma is proved.
  \end{proof}
Let us formulate another lemma, which easily follows from the just proved simple induction on the number of suffix transformations
 
  \begin{lemma}   \label{ekvivalent2} 
 If the terms $U_{i}$ and $U_{j}$ of an infinite sequence \eqref{pref.preob} have the same endings, then the terms $U_{i+t}$, $U_{j+t}$, obtained from $U_{i}$ and $U_{j}$ by suffix transformations, also have the same endings for any $t.$ 
 \end{lemma}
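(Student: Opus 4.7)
The plan is a direct induction on $t$, using Lemma \ref{ekvivalent1} as the one-step engine, exactly as the author's remark suggests. First I would unpack the phrase ``same endings'': by Lemma \ref{raspolgol}, every term of the sequence \eqref{pref.preob} decomposes, relative to its span, as a beginning followed by a tail that carries the suffix representation and (when present) the suffix head. Saying that $U_i$ and $U_j$ have the same endings then means these tails, together with their suffix representations, are graphically identical — which is precisely the hypothesis placed on $U_p$ and $U_q$ in Lemma \ref{ekvivalent1}.

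The base case $t=0$ is trivial, and the case $t=1$ is exactly the conclusion of Lemma \ref{ekvivalent1}: after a single suffix transformation, the resulting words $U_{i+1}$ and $U_{j+1}$ have graphically equal ends with the same suffix.

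For the inductive step, I would assume that $U_{i+t}$ and $U_{j+t}$ share the same ending and consider the $(t+1)$-st suffix transformation. Two subcases arise. If that transformation acts strictly inside the beginning part (the $K$-side of the span), then the common tail is untouched and the endings of $U_{i+t+1}$ and $U_{j+t+1}$ remain graphically equal without any further argument. Otherwise, the transformation acts on the suffix head lying inside the common tail; but then Lemma \ref{ekvivalent1} applies verbatim to $U_{i+t}$ and $U_{j+t}$ and guarantees that the ends of $U_{i+t+1}$ and $U_{j+t+1}$ are again graphically equal with the same suffix. In either case the induction closes.

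The only point that requires genuine care, rather than a real obstacle, is verifying that the structural hypotheses of Lemma \ref{ekvivalent1} persist from one step to the next — in particular, that the prefix and suffix heads sit in the same positions inside the shared tail in both terms. This follows from the uniqueness of the prefix and suffix representations recalled from \cite{1}, together with the infinitude of \eqref{pref.preob} via Lemma \ref{raspolgol}, which keeps the span expanding and therefore keeps the tail stable under the transformation. No new idea beyond Lemma \ref{ekvivalent1} is needed; the proof is essentially an iteration of it.
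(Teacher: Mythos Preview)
Your proposal is correct and matches the paper's approach exactly: the paper offers no separate proof beyond the remark that the lemma ``easily follows from the just proved [Lemma~\ref{ekvivalent1}] by simple induction on the number of suffix transformations,'' and your argument supplies precisely that induction. (As a minor note, your first subcase in the inductive step is actually vacuous---a suffix transformation by definition replaces the suffix head, which by Lemma~\ref{raspolgol} always lies in the ending---but this is harmless.)
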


 \begin{definition}    \label{perewal} 
 Suppose \eqref{pref.preob} is a sequence of suffix transformations and
 $U_{p_{1}}, U_{p_{2}}, U_{p_{3}}, U_{p_{4}}, \ldots$, is an infinite sequence of the words from that sequence, so that 
\begin{equation} \label{osnowa} 
         U_{p_{2\lambda}} \eqcirc K_{p_{2\lambda}} A D_{p_{2\lambda}}  L_{p_{2\lambda}}
\end{equation}
\begin{equation}              \label{osnowa 1}                                                
         U_{p_{2\lambda+1}} \eqcirc K_{p_{2\lambda+1}} C_{p_{2\lambda+1}} B L_{p_{2\lambda+1}} 
\end{equation}
 where $K_{i}, L_{j}$ are the subwords of the spans, $A$ and $B$ are the prefix heads of the words \eqref{osnowa} and \eqref{osnowa 1} respectively, that, as a result of suffix transformations $A \rightarrow D_{1} D_{2}$, or $B \rightarrow H_{1} H_{2}$ of the semigroup $\Pi$, the subwords $D_{1}$ and $H_{2}$ join the spans of the words $U_{p_{2\lambda}}$ and $U_{p_{2\lambda+1}}$ respectively.
$H_{1}$ is constant for the entire subsequence and has the same suffix guide letter. Then the words $U_{p_{2\lambda}}$ and $U_{p_{2\lambda+1}}$ will be called, respectively, left and right passes of the sequence \eqref{pref.preob}, given by the pair $(A, B)$ of the prefix heads $A, B.$
\end{definition}

The existence of a subsequence of passes follows from the fact that the beginning and ending of any word can be affected by the defining relations only a finite number of times, and the number of the defining words themselves with various representations is finite. In an infinite \eqref{pref.preob} sequence there will always be a subsequence with the specified properties for some pairs of words of the defining words $(A, B)$ of the semigroup $\Pi$.

Let 
\begin{equation}             
\label{pereval}   
U_{p_{1}}, U_{p_{2}}, U_{p_{3}}, U_{p_{4}}, \ldots,
\end{equation}
be a subsequence of passes of the sequence\eqref{pref.preob}, given by the pair of words $(A, B)$, with a fixed presentation for the entire subsequence, with all $B$ words having the same suffix guide letter.

Let the prefix head $B$ be translated into the word $H_1 H_2$ in the words $U_{p_{2}}$ and $U_{p_{4}}$ by the corresponding prefix transformations. By virtue of Lemma \eqref{ekvivalent} we cane replace the subsequences \eqref{pref.preob} by the subsequences of suffix transformations.
Further, the subsequences \eqref {pref.preob}, transforming the right passes $ U_ {p_ {2k + 1}} $ into their prefix forms, into the passes $ U_ {p_ {2k + 2}}$ respectively, and $ U_{p_{2k + 1}}$ in $ U_ {p_ {2k + 2}} $ are replaced by subsequences of suffix transformations, which, as was found in the Lemma \ref{ekvivalent}, is possible.
 \begin{lemma}   \label{qadro}
  For any $t$ and for any $k_{1}, k_{2}$ the words $U_{p_{2k_{1}} + t}$  and  $U_{p_{2k_{2}} + t}$  from the sequence   $U_{p_{2k}} \rightarrow \ldots \rightarrow U_{p_{2k+2}} \rightarrow \ldots \rightarrow $  of suffix transformations, that transform right passes of the sequence \eqref{pref.preob} into each other, have the same endings.
\end{lemma}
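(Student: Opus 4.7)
The plan is to argue by induction on $t$ and reduce everything to a single base step via the structure of passes in Definition~\ref{perewal}. For the inductive step from $t$ to $t+1$, once $U_{p_{2k_1}+t}$ and $U_{p_{2k_2}+t}$ are known to have the same endings, Lemma~\ref{ekvivalent2} yields the conclusion immediately; so the real content of the lemma is concentrated in securing a common ending at a single starting step.

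For that base step I would work with the right passes themselves. By the construction of the subsequence \eqref{pereval}, every right pass $U_{p_{2\lambda+1}}$ has the form $K_{p_{2\lambda+1}} C_{p_{2\lambda+1}} B L_{p_{2\lambda+1}}$ with a fixed presentation of the prefix head $B$, the same suffix guide letter, and a constant left piece $H_1$ in the split $B \to H_1 H_2$. The first suffix transformation applicable at a right pass is forced to act on $B$ (other candidate positions of a suffix head are excluded by Lemma~\ref{raspolgol} combined with the infinity of \eqref{pref.preob}), and since $B$ and the defining relation triggered by the common suffix guide letter are the same in both words, one step produces the block $H_1 H_2$ at the analogous position in both $U_{p_{2k_1+1}}$ and $U_{p_{2k_2+1}}$, with $H_2$ absorbed into the span and shielded from any further transformations.

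From here I would appeal to Lemma~\ref{ekvivalent1}: sharing an ending together with the common suffix head $H_1$ is preserved under a single suffix transformation, up to graphical equality of suffix representations. Iterating, either by direct induction or through Lemma~\ref{ekvivalent2} invoked once and for all, propagates the coincidence of endings all along the sequence $U_{p_{2k}} \to \ldots \to U_{p_{2k+2}} \to \ldots$, which is exactly the statement of the lemma.

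The point I expect to spend real care on is the base step itself: verifying that the first available suffix transformation is indeed forced to split $B$ into $H_1 H_2$, that $H_1 H_2$ then sits at the ``same position'' in both words in a sense strong enough for Lemma~\ref{ekvivalent1} to apply, and that no competing suffix head elsewhere in the word can spoil the alignment. The acyclicity of the left and right graphs of $\Pi$, the monotone expansion of the span, and the uniqueness of prefix and suffix representations are precisely what make this forcing argument go through; once the alignment at the first step is secured, the remainder of the induction is entirely formal.
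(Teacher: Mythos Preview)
Your approach is essentially the paper's: anchor the base case at the passes themselves (where the endings coincide by the very definition of the pass subsequence), apply Lemma~\ref{ekvivalent1} for one suffix step, and then let Lemma~\ref{ekvivalent2} carry the induction on $t$. The paper's argument is just slightly leaner---it observes directly that the endings of the $U_{p_{2k}}$ agree at $t=0$ by definition, invokes Lemma~\ref{ekvivalent1} once for $t=1$, and then cites Lemma~\ref{ekvivalent2}---so your explicit analysis of the first transformation $B\to H_1H_2$ is already subsumed by Lemma~\ref{ekvivalent1} and can be omitted.
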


\begin{proof}
From the definition of the right pass for the pair ($A$, $B$) it immediately follows that the endings of the words $U_{p_{2k}}$ are the same for all $k.$
But then, by Lemma \ref{ekvivalent1} the endings of the words $U_{p_{2k_{1}} + 1}$  and  $U_{p_{2k_{2}} + 1}$ are also the same. Further, from Lemma
\ref{ekvivalent2} implies the correctness of the lemma being proved.
\end{proof}

Let us consider a subsequence of passes
$$U_{p_{1}}, U_{p_{2}}, U_{p_{3}}, U_{p_{4}},\ldots,$$ 
of a sequence \eqref{pref.preob} of prefix transformations defined for a pair ($A, B$) of the defining words. Without loss of generality, we can assume that the terms of this subsequence have the form
  \begin{equation}                   \label{perio1}
U_{p_{1}} \eqcirc K_{1} A B_{1} L_{1}
\end{equation}
\begin{equation}                   \label{perio2}
U_{p_{2\lambda}} \eqcirc K_{2\lambda} A_{2\lambda} B L_{2\lambda}, 
\end{equation}
\begin{equation}                   \label{perio3}
U_{p_{2\lambda+1}} \eqcirc K_{2\lambda+1} A B_{2\lambda+1} L_{2\lambda+1},
\end{equation}
\begin{equation}                   \label{perio4}
U_{p_{2\lambda+2}} \eqcirc K_{2\lambda+2} A_{2\lambda+2} B L_{2\lambda+2}
\end{equation}   
$\lambda =1,2,\ldots$  $K_{i}$ and $L_{i}$ are the left and right sides of the spans of the corresponding transformation words \eqref{pref.preob}, $A$ and $B$ are the corresponding prefix heads of words that are the passes of the sequence \eqref{pref.preob}.

Suppose that the sequence \eqref{pref.preob} is replaced by the sequence \begin{equation}              \label{suf}
U_{p_{1}}  \rightarrow U_{p_{2}} \rightarrow \ldots \rightarrow  U_{p_{2\lambda}} \rightarrow \ldots \rightarrow U_{p_{2\lambda+1}} \rightarrow \ldots
\end{equation} of suffix transformations, which is allowed by Lemma \ref{ekvivalent}. In that case the subword $B$ is the end of the words $U_{p_{2\lambda}},\ldots, U_{p_{2\lambda+2m}} \ldots$  by definition, but by lemmas  \ref{ekvivalent1}, \ref{ekvivalent2} for any number $t$ the ends of
the words $U_{p_{2\lambda}+t}$ coincide for all $\lambda,$, and, therefore, the sequence of ends of the words
$U_l$ of the sequence of suffix transformations obtained from \eqref{pref.preob}  turns out to be
periodic. This means that the group of different ends of words of the sequence is finite and the length of each of them does not exceed the length of the maximum from the
words of a finite subsequence starting with the word $U_{p_{2\lambda}}$ and ending with the word
$U_{p_{2\lambda+2}}.$

\begin{lemma}
\label{odnozn}
From the sequence \eqref{pref.preob} an infinite subsequence $$U_{\sigma_{1}} \rightarrow U_{\sigma_{2}} \rightarrow \ldots \rightarrow U_{\sigma_{m}} \rightarrow \ldots$$ can be choose.
For which exists a word $W$, which is the kernel of each of the terms of $U_{\sigma_{i}}$.
\end{lemma}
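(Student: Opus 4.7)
The plan is to combine the periodicity of endings established in the paragraphs preceding the statement with the left-right dual statement for beginnings, and then pigeonhole to extract an infinite sub-subsequence of passes on which the kernel is forced to be one fixed word $W$.

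First, I would fix the subsequence of passes \eqref{pereval} associated with a pair $(A,B)$ of defining words and, via Lemma \ref{ekvivalent}, pass to the equivalent sequence \eqref{suf} of suffix transformations. The discussion preceding the lemma, based on Lemmas \ref{ekvivalent1}, \ref{ekvivalent2}, and \ref{qadro}, already shows that the endings of the terms of \eqref{suf} form a periodic sequence: they take only finitely many distinct values, all of length at most the longest word occurring in a single period. Reading the same subsequence (again by Lemma \ref{ekvivalent}) as one of prefix transformations and applying the left-right dual versions of Lemmas \ref{ekvivalent1}, \ref{ekvivalent2}, \ref{qadro} — now to the left passes of the pair $(A,B)$ and their common prefix guide letter of $A$ — one obtains the analogous statement for the beginnings: they too take only finitely many distinct values of uniformly bounded length.

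A double application of the pigeonhole principle then selects an infinite sub-subsequence $U_{\sigma_{1}} \to U_{\sigma_{2}} \to \ldots$ of passes on which both the beginning $K \alpha_{1} \ldots \alpha_{n} A X B$ and the ending $A X B \beta_{1} \ldots \beta_{m} L$ from the representation of Lemma \ref{raspolgol} are the same graphical words for every $i$. These two substrings overlap exactly in the middle block $A X B$, so matching both of them forces $U_{\sigma_{i}}$ itself to be the same graphical word for every $i$; in particular the subword wedged between the span edges $K$ and $L$ is the same word $W$, which by definition is the kernel of each $U_{\sigma_{i}}$.

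The main obstacle I expect is the left-right symmetric periodicity statement for the beginnings: the text only writes out the argument for endings, so I would need to verify carefully that Lemmas \ref{ekvivalent1}, \ref{ekvivalent2}, \ref{qadro} admit the expected dual versions for prefix transformations acting on left passes (with the roles of the ``$A$-side'' and its prefix guide letter swapped into those that $B$ and its suffix guide letter play in the text), and that Lemma \ref{ekvivalent} legitimately permits interchanging the two modes within the chosen subsequence of passes. Once that dual statement is in hand, the final pigeonhole and the identification of the kernel via the overlap in Lemma \ref{raspolgol} are routine.
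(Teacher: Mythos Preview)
Your route is workable but it is considerably more elaborate than the paper's, and the last step overshoots a bit. The paper never invokes any left--right dual of the periodicity argument: having replaced \eqref{pref.preob} by the sequence \eqref{suf} of suffix transformations, it simply notes that each kernel of a term of \eqref{suf} is a subword of the corresponding ending, and since those endings take only finitely many distinct values by the periodicity established in the paragraph preceding the lemma, so do the kernels. A single application of the pigeonhole principle then yields the infinite subsequence with common kernel $W$, and one switches back to prefix transformations at the very end. In particular, your anticipated obstacle---checking dual versions of Lemmas~\ref{ekvivalent1}, \ref{ekvivalent2}, \ref{qadro} for beginnings---never arises in the paper's argument.

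There is also a small slip in your concluding step: you assert that the full words $U_{\sigma_i}$ become graphically identical. But the span edges $K$ and $L$ strictly grow along \eqref{pref.preob}, so identical ``beginning $K\alpha_1\ldots\alpha_n AXB$'' and ``ending $AXB\beta_1\ldots\beta_m L$'' in the literal sense of Lemma~\ref{raspolgol} cannot recur. What a double pigeonhole on the periodic data actually fixes are the pieces $\alpha_1\ldots\alpha_n AXB$ and $AXB\beta_1\ldots\beta_m$ (i.e.\ beginning and ending with the span edges stripped off); the kernel is then determined via the overlap $AXB$, so your conclusion about $W$ still follows. The paper sidesteps this bookkeeping entirely by working only on the ending side.
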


\begin{proof}
Let \eqref{suf} be the sequence of suffix transformations constructed from \eqref{pref.preob} in the way
shown above. And let $W_{1},W_{2},\ldots,W_{m},\ldots$ be the sequence of the kernels of the
terms \eqref{suf}. Since the kernels of the terms of the sequence \eqref{suf} are the subwords of the
ends of these words, then they are also finite numbers and, therefore, some of them,
say $W$, is repeated infinitely.
Let this be an infinite sequence of the suffix transformations in the form
$$U_{\sigma_{1}} \rightarrow U_{\sigma_{2}} \rightarrow \ldots \rightarrow U_{\sigma_{m}} \rightarrow \ldots.$$ And finally we replace the resulting sequence with the sequence of prefix
transformations.
\end{proof}

\begin{theorem} \label{polugrupp}
In a semigroup without cycles, the divisibility problem is decidable.
\end{theorem}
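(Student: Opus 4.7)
The strategy is to turn the structural lemmas into a terminating decision procedure. Given two words $U$ and $V$ over the alphabet of $\Pi$, I would launch the process \eqref{pref.preob} of prefix transformations on $U$ with the first letter $b$ of $V$ as the guide letter, and watch for three mutually exclusive outcomes: (a) some prefix form $U_n$ begins with $b$, at which point we strip $b$ and recurse on the tails of $U_n$ and $V$; (b) the process breaks off because the required prefix head is absent in some $U_n$, in which case the uniqueness of the prefix representation guarantees $U\neq V$ in $\Pi$; (c) the process is infinite. Outcomes (a) and (b) already give algorithmic answers, so the whole argument reduces to making case (c) recognisable in a bounded number of steps.

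To deal with (c), I would combine the lemmas as follows. By the remarks after Definition \ref{nepod}, the spans $(K,L)$ of the terms expand monotonically, so after finitely many steps the span surrounds the initial word and the picture is controlled by Lemma \ref{raspolgol}. Applying Lemma \ref{ekvivalent}, I convert the infinite prefix sequence into the corresponding suffix sequence; Lemmas \ref{ekvivalent1}, \ref{ekvivalent2}, and \ref{qadro} then yield that the endings of the $U_n$ form an eventually periodic sequence, whose period length is bounded by the maximum length of the finite subsequence $U_{p_{2\lambda}},\ldots,U_{p_{2\lambda+2}}$ between two consecutive passes. Lemma \ref{odnozn} then provides an infinite subsequence of terms sharing a common kernel $W$. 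Because the possible endings (and hence the possible kernels) occupy a finite set whose cardinality is an effectively computable function of the defining relations, the initial word $U$, and the guide letter $b$, a pair of indices exhibiting the pass/kernel repetition of Lemma \ref{odnozn} must appear within a computable number $N$ of transformations.

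The algorithm therefore performs at most $N$ steps of the sequence \eqref{pref.preob}. If one of the terminating outcomes (a) or (b) occurs first, it returns the corresponding answer. Otherwise it will have exhibited, among $U_1,\ldots,U_N$, two terms fitting the pass scheme \eqref{perio1}--\eqref{perio4} with coinciding endings and common kernel $W$; by Lemmas \ref{ekvivalent1}, \ref{ekvivalent2}, \ref{qadro}, and \ref{odnozn}, the sequence must then be genuinely infinite, so in particular the first letter of $U_n$ never becomes $b$, and hence $U \neq V$ in $\Pi$. This reduction handles the symmetric suffix problem as well, so iterating the procedure on the recursive calls produced by (a) terminates after a number of stages bounded by $|V|$, which settles decidability.

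The step I expect to be the main obstacle is the extraction of the effective bound $N$. Lemma \ref{odnozn} only asserts the existence of a repeated kernel in the infinite sequence; one must refine it to show that the repetition occurs within an a priori computable number of transformations, uniformly bounded by the defining data. This requires estimating two finite quantities simultaneously, namely the list of possible endings (controlled through the maximal length of a defining word and the uniqueness of suffix representations in \cite{1}) and the list of possible kernels before the span has expanded past both $U$ and $V$. Once these estimates are made explicit, the algorithm above halts correctly on all inputs and the theorem follows.
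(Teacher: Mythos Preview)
Your plan is exactly the paper's argument: run the prefix-transformation process with the first letter of $V$ as guide, peel off matched letters, and declare non-divisibility either when a head is missing or when periodicity forces the process to be infinite; the paper invokes Lemma~\ref{odnozn} for the latter just as you do. You are in fact more scrupulous than the paper about the effective bound $N$---the paper simply asserts that periodicity makes the infinite case detectable without isolating that step---and one small slip: in outcome (c) you should conclude that $U$ is not left-divisible by $V$, not that $U\neq V$.
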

\begin{proof}
Suppose that we are given two words $U$ and $V$ in semigroup $\Pi$ and we
need to know whether the word $U$ is divisible by $V$. Let us run the algorithm, which will
conduct prefix transformations from the word $U$ to the word $V$ starting with the guide
letter $V$, which is the first letter of the word $V,$ and we will follow this process.

If in the beginning of the algorithm the form of word $U$ is obtained, starting from
the first letter of the word $V$, then instead of $U$ we take the resulting form without
the first letter and for that again we run the construction algorithm of prefix
transformations in the word obtained from $V$ by erasing the first letter.
It is clear that if the repeated construction process of prefix transformations reduces
$V$ to an empty word, then U is divisible by $V.$ The constructing algorithm of prefix transformations can terminate at some step due to the impossibility of finding a suitable prefix, in that case the answer is no.
The process of constructing prefix transformations can be continued infinitely, but
then, by \eqref{odnozn}, the sequence of prefix transformations turns out to be periodic,
so the answer is negative again, $U$ is not divisible by $V.$
\end {proof}

Using the result from (\cite{4}) that if in the semigroup without cycles the problem of
divisibility is decidable, then in a group with the same relations (a group without
cycles) the word equality problem is decidable, we get the following result.

\begin{theorem} \label{grupp}
The word equality problem is decidable in any group without cycles.
\end{theorem}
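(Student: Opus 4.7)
The plan is to treat this statement as essentially a direct corollary of Theorem \ref{polugrupp} combined with the reduction recorded in \cite{4}. Given a group $G$ without cycles, specified by generators $a_1,\ldots,a_q$ and defining relations $P_i=Q_i$, I would first pass to the semigroup $\Pi$ presented by the same generators and the same defining relations. Since the condition ``without cycles'' is a condition on the left and right graphs built out of the first/last letters of the $P_i$ and $Q_i$, and these graphs depend only on the presentation (not on whether the relations are interpreted in a semigroup or in a group), $\Pi$ is automatically a semigroup without cycles in the sense of the introduction.

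Next I would invoke Theorem \ref{polugrupp}: the divisibility problem in $\Pi$ is decidable, so there is an algorithm that, on input words $X,Y$ over the generators, decides whether $X$ is divisible by $Y$ in $\Pi$. The result quoted from \cite{4} says precisely that whenever the divisibility problem is decidable in a semigroup without cycles, the word equality problem is decidable in the group with the same presentation. Plugging our algorithm into that reduction gives the desired decision procedure for equality of words in $G$, and the theorem follows.

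The main obstacle is not in the short argument above but in the black-box reference to \cite{4}: we are relying on a non-trivial reduction from group-theoretic equality to semigroup-theoretic divisibility, which itself leans on Adjan's embeddability theorem recalled in the introduction (the embedding of a no-cycle semigroup into the group with the same relations). Once that external reduction is accepted, the only thing to verify explicitly is that the left and right graphs associated with a group presentation coincide with those of the corresponding semigroup presentation, which is immediate from the definitions, so no further construction or induction is required.
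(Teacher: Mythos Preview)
Your proposal is correct and matches the paper's own argument essentially verbatim: the paper also derives Theorem~\ref{grupp} immediately from Theorem~\ref{polugrupp} together with the reduction from \cite{4}, with no additional ingredients. Your extra remark that the ``without cycles'' property depends only on the presentation (hence transfers from the group to the associated semigroup) is a reasonable clarification, and your caveat about the black-box nature of \cite{4} is accurate but does not affect the validity of the deduction.
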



\begin{thebibliography}{99}

\bibitem{1} Adjan, S.I. "Defining relations and algorithmic problems for groups and semigroups". Proceedings of the Steklov Institute of Mathematics 85, (1966).
\bibitem{2} Adjan, S.I., Durnev, V.G. "Decision problems for groups and semigroups". Russian Math. Surveys 55, 332 (2000).
\bibitem{3} W. Magnus, "Identity problem for groups with one defining relation" (Russian Math. Surveys. 8, 365-376, (1940).
\bibitem{4} Sarkisyan, O.A. "Some relations between the word and divisibility problems in groups and semigroups". Izvestiya: Mathematics USSR. 43:4, 909–921 (1979).
\end{thebibliography}
\end{document}